\newtheorem{theorem}{Theorem}[section]
\newtheorem{lemma}[theorem]{Lemma}
\newtheorem*{claim}{Claim}
\newtheorem{conjecture}[theorem]{Conjecture}
\DeclarePairedDelimiter{\ceil}{\lceil}{\rceil}
\DeclarePairedDelimiter{\floor}{\lfloor}{\rfloor}
\begin{document}
	\sloppy
	
	\title[Rota's Basis Conjecture]{Improved bounds for Rota's Basis Conjecture}
	
	\author{Sally Dong}
	\author{Jim Geelen}
	\address{Department of Combinatorics and Optimization, University of Waterloo, Waterloo, Canada} 
	\thanks{This research was partially supported by grants from the
Office of Naval Research [N00014-10-1-0851] and NSERC [203110-2011] as well as an Undergraduate Student Research Award 
from NSERC}
	
	\subjclass{05B35}
	\keywords{Rota's Basis Conjecture, matroid, probabilistic method}
	\date{\today}
	
	\begin{abstract}
		We prove that, if $B_1, \dots, B_n$ are disjoint bases of a rank-$n$ matroid, then there are at least $\frac{n}{7 \log n}$ disjoint transversals of $(B_1, \dots, B_n)$ that are also bases.
	\end{abstract}
	\maketitle
	
	\section{Introduction}
	
	A  \emph{transversal basis} of a collection $(B_1, \dots, B_n)$ of sets of elements in a rank-$n$ matroid is a basis  containing exactly one element from each of $B_1,\ldots,B_n$. Rota's Basis Conjecture, which first appeared in~[\ref{cite:HR94}], is receiving renewed interest as the topic of Polymath 12 ~[\ref{cite:polymath12}]. 
	
	\begin{conjecture}[Rota's Basis Conjecture]
		Given disjoint bases $B_1, \dots, B_n$ of a rank-$n$ matroid, there exist $n$ disjoint transversal bases. 
	\end{conjecture}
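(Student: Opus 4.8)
The plan is to recast the conjecture as a partition problem and then attack it by combining the probabilistic method with an absorption-style repair argument. Since every transversal of $(B_1,\dots,B_n)$ lies in $\bigcup_i B_i$, we may restrict $M$ to $\bigcup_i B_i$ and assume the ground set $E$ has exactly $n^2$ elements, with $(B_1,\dots,B_n)$ an ordered partition of $E$. A family of $n$ pairwise disjoint transversal bases is then the same thing as an ordered partition of $E$ into parts $T_1,\dots,T_n$, each a transversal and a basis; equivalently, a choice of bijections $\sigma_i\colon B_i\to\{1,\dots,n\}$ such that for every $k$ the colour class $\{\,e : e\in B_i,\ \sigma_i(e)=k\,\}$ is a basis. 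I picture this as an $n\times n$ array whose $i$-th row is a permutation of $B_i$ spread across the $n$ columns, the goal being that every column is a basis; in matroid-union language one wants $E$ to decompose into $n$ sets that are simultaneously bases of $M$ and of the partition matroid with parts $B_1,\dots,B_n$.

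First I would produce a near-perfect packing by the probabilistic method. Choosing each $\sigma_i$ independently and uniformly at random, a fixed prospective column is a basis with probability bounded below by a constant depending only on the local structure of the basis-exchange graph, and the events ``column $C$ is dependent'' for distinct $C$ share a variable only when $C$ and $C'$ meet a common row, so the dependency graph is sparse. A Lov\'asz Local Lemma argument should then yield a sub-family of $(1-o(1))n$ pairwise disjoint transversal bases, leaving an $o(n^2)$-element remainder organised into $o(n)$ transversals that may fail to be bases. This is the mechanism behind the $\tfrac{n}{7\log n}$ bound of the present paper, pushed from a $\tfrac{1}{7\log n}$ fraction to a $1-o(1)$ fraction.

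Second, and before running the random step, I would reserve a sparse absorbing sub-array: a bounded number of dedicated columns together with a random selection of cells inside them, engineered so that whatever bad remainder the first step produces can be folded in by exchanging elements of the bad columns with cells of the absorber along matroid circuits, after which the modified absorber columns are bases and nothing outside has changed. The structural engine here is the exchange axiom: if $C$ is a dependent size-$n$ set, hence non-spanning, and $C'$ is a basis, then $C'$ spans an element outside $\mathrm{span}(C)$, and one wants to move such an element into $C$ while pushing the displaced element of $C$ into $C'$, strictly increasing $\mathrm{rank}(C)$.

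The crux — and the step I expect to dominate the work — is making the absorption robust, and the honest obstacle is that the desired exchange need not be \emph{local}: swapping an element of a bad column $C$ for an element of the same row of a basic column $C'$ can destroy basicness of $C'$, so one is forced to chain exchanges through several columns, and the absorber must be built so that chains of bounded length always exist and can be scheduled in parallel over the entire $o(n)$-column remainder without collision. It is precisely this rigidity — absent from matroid-union statements, where exact packings come for free — at which the Alon--Tarsi determinant method for vector spaces of characteristic zero, the counting argument for paving matroids, and the $(1-o(1))n$ probabilistic bounds all stop short; closing it would, I expect, require a genuinely new monovariant on the exchange graph of a family of bases that forces progress toward an all-bases colouring.
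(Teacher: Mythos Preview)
The statement you are attempting to prove is Rota's Basis Conjecture itself, which the paper states as an open conjecture and does \emph{not} prove; the paper's contribution is the much weaker Theorem~\ref{thm:main}. So there is no proof in the paper to compare against, and what you have written is not a proof either but a programme with acknowledged missing pieces.

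Beyond that, the probabilistic step contains a concrete error. With each $\sigma_i$ a uniformly random bijection $B_i\to\{1,\dots,n\}$, every column contains exactly one element from every row, so any two column-events ``$C$ is dependent'' and ``$C'$ is dependent'' share \emph{all} the variables $\sigma_1,\dots,\sigma_n$; the dependency graph is the complete graph $K_n$, not sparse, and the Local Lemma gives nothing unless the bad-event probability is $O(1/n)$. You give no argument for even a constant lower bound on the probability that a single uniformly random transversal is a basis, and in fact the paper needs $\alpha=3\lceil\log n\rceil$ elements from each $B_i$, not one, precisely because the Rado-condition union bound fails otherwise. Your description of the paper's mechanism is also inaccurate: it does not use the Local Lemma at all, but rather Rado's Theorem, a coupling comparison (Lemma~\ref{lemma:worst-case-Rado}) reducing to a coupon-collector estimate, and a straight union bound. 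Finally, you yourself identify the absorption step as requiring ``a genuinely new monovariant'' that you do not supply; that is the heart of the difficulty, and without it the outline does not approach a proof of the conjecture.
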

	
	Geelen and Webb~[\ref{cite:GW07}] showed that it is possible to get $\ceil{\sqrt{n-1}}$ disjoint transversal bases; our main result improves on their bound.
	
	\begin{theorem} \label{thm:main}
		Given disjoint bases $B_1, \dots, B_n$ of a rank-$n$ matroid, where $n\ge 2$, there are at least $\floor*{\frac {n}{6 \ceil{\log n}}}$ disjoint transversal bases.
	\end{theorem}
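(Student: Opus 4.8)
The plan is to extract the transversal bases one at a time, using the probabilistic method at each step to guarantee that the elements not yet used still support another transversal basis; the factor $\log n$ will enter as the cost of a union bound inside that step, which is why only about $n/(6\lceil\log n\rceil)$ extractions can be certified rather than close to $n$.

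First I would record the standard criterion for when a tuple of disjoint independent sets admits a transversal basis. If $A_1,\dots,A_n$ are disjoint independent sets of the rank-$n$ matroid $M$, then applying matroid intersection to $M$ and the transversal matroid of $(A_1,\dots,A_n)$ — equivalently, the defect form of Rado's theorem — shows that $(A_1,\dots,A_n)$ has a transversal basis if and only if $r_M\!\left(\bigcup_{i\in J}A_i\right)\ge |J|$ for every $J\subseteq\{1,\dots,n\}$. Since each $A_i$ is independent, this holds automatically whenever $|J|\le\min_i|A_i|$, so only ``large'' sets $J$ can fail it; and $(B_1,\dots,B_n)$ satisfies it outright because each $B_i$ spans $M$. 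The invariant to be propagated through the extraction process should be a quantitative strengthening of this criterion — stated most conveniently through the Nash--Williams/Edmonds matroid-union formula, as a lower bound on the number of disjoint $M$-bases contained in $\bigcup_i A_i$ together with control of the ranks $r_M(\bigcup_{i\in J}A_i)$ — chosen so that it is comfortably satisfied by $(B_1,\dots,B_n)$ and cannot degrade by more than $6\lceil\log n\rceil$ at each step.

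The heart of the matter is then a single probabilistic lemma: if $(A_1,\dots,A_n)$ satisfies the invariant with room to spare at least, roughly, a fixed constant times $\lceil\log n\rceil$, then there is a transversal basis $T$ of $(A_1,\dots,A_n)$ for which $(A_1\setminus T,\dots,A_n\setminus T)$ still satisfies the invariant, with the margin reduced by at most $6\lceil\log n\rceil$. To prove it I would pick $T$ from a suitable distribution on the transversal bases of $(A_1,\dots,A_n)$ — for instance a uniformly random transversal conditioned on being a basis, so that each element of each $A_i$ is hit with probability not much larger than $1/|A_i|$ — and then, for each subset $J$ that is currently close to tight, bound the probability that deleting $T$ violates the rank criterion at $J$: deleting one element per coordinate lowers $r_M(\bigcup_{i\in J}A_i)$ only through the coordinates $i\in J$ where the deleted element is essential for that rank, and a sufficiently spread-out random $T$ makes the number of such coordinates concentrate below the current surplus at $J$. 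Summing these small failure probabilities over the relevant sets $J$ and forcing the total below $1$ is what calibrates the constant $6$ and the $\lceil\log n\rceil$, and iterating the lemma from $(B_1,\dots,B_n)$ then produces $\floor*{\frac{n}{6\ceil{\log n}}}$ pairwise disjoint transversal bases, with small $n$ handled separately using $n\ge 2$. I expect the main obstacle to be exactly this union bound: the family of subsets $J$ that must be controlled is a priori of size $2^n$, far too large for a margin of order $\log n$ to absorb, so one has to exploit the lattice-closure structure of the tight sets (or recast the estimate so that it ranges over the $n$ coordinates rather than over subsets), and one has to check that the chosen distribution on transversal bases really makes the essential-coordinate counts concentrate — which is where the disjointness of $B_1,\dots,B_n$, and hence of the $A_i$, is used.
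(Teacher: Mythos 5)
Your proposed route---iteratively extracting one transversal basis at a time while maintaining a quantitative strengthening of Rado's condition---is genuinely different from the paper's, but as written it has a real gap at exactly the point you flag. The ``heart of the matter'' lemma is never established: the invariant itself is left unspecified (``a lower bound on the number of disjoint bases \dots together with control of the ranks''), and the failure-probability estimate that would drive it is missing. Your own diagnosis is correct: with a margin of only $O(\log n)$ per set $J$, a union bound over all $2^n$ subsets $J$ cannot close, and the suggested remedies (lattice structure of tight sets, or re-indexing over coordinates) are hopes rather than arguments. There is a second unsupported step: sampling ``a uniformly random transversal conditioned on being a basis'' is a nontrivial distribution whose marginals need not be close to uniform on each $A_i$, and the claimed concentration of the number of ``essential'' coordinates per tight set $J$ is asserted, not proved. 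Since the entire construction of the $\lfloor n/(6\lceil\log n\rceil)\rfloor$ bases rests on iterating this unproven lemma, the proposal does not constitute a proof.

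It is worth contrasting this with how the paper actually proceeds, because the difference explains why its union bound succeeds where yours stalls. The paper does no iteration and maintains no invariant: it partitions each $B_i$ into $2m$ disjoint random blocks of size $\alpha=3\lceil\log n\rceil$ (where $m=\lfloor n/(6\lceil\log n\rceil)\rfloor$, so the blocks fit), and shows that for each fixed $j$ the tuple of $j$-th blocks contains a transversal basis with probability at least $\nicefrac12$; linearity of expectation then yields $m$ disjoint transversal bases in one shot. The union bound over all $2^n$ Rado conditions is affordable there because the event being bounded is that $k$ \emph{independent} random $\alpha$-subsets of the $n$ bases all fall into a rank-$(k-1)$ flat, which (after reducing to the worst case $B_1=\cdots=B_k$) has probability about $\binom{n}{k-1}\bigl((k-1)/n\bigr)^{k\alpha}$ --- superexponentially small in $k$, easily beating the $\binom{n}{k}$ choices of $J$. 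In your scheme the per-$J$ failure probability of deleting a single transversal cannot be anywhere near that small, which is why the one-shot random-partition argument, rather than a repair of your union bound, is the efficient path to the stated bound.
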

	
	Throughout the paper we use the natural logarithm. Using the same methods, but taking more care with the calculations, it should be possible to improve on our bound of $\floor*{\frac{n}{6 \ceil{\log n}}}$; however, 
	new ideas will be needed to beat $\frac{n}{\log n}$. The bound of $\frac{n}{7 \log n}$,  claimed in the abstract, is obtained by combining the bound
        $\ceil{\sqrt{n-1}}$, when $n\le 3000$, with the bound $\floor*{\frac {n}{6 \ceil{\log n}}}$, when $n> 3000$.
		
	We present the central ideas of the proof here in the introduction, leaving the technical details for the next section. 
	We deduce Theorem~\ref{thm:main} from the following key result.
	
	\begin{theorem} \label{thm:main2}
		Let $B_1, \dots, B_n$ be disjoint bases of a rank-$n$ matroid, where $n\ge 2$, and let $\alpha = 3 \ceil{\log n}$. If we choose $\alpha$-element subsets $S_1, \dots, S_n$ independently and uniformly at random from $B_1,\ldots,B_n$, respectively, then $(S_1, \dots, S_n)$ contains a transversal basis with probability at least $\nicefrac 12$.
	\end{theorem}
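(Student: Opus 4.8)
The plan is to reduce the statement, via Rado's theorem, to a property of flats, and then bound the failure probability by a union bound over flats whose convergence is driven by the fact that $\alpha=3\lceil\log n\rceil$ is of order $\log n$.

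First I would note that a transversal basis of $(S_1,\dots,S_n)$ uses only elements of $B_1\cup\cdots\cup B_n$, so we may restrict $M$ to this set; since the $B_i$ are disjoint bases, the resulting matroid has exactly $n^2$ elements. By Rado's theorem, $(S_1,\dots,S_n)$ contains a transversal basis if and only if no flat $F$ of $M$ is \emph{bad}, meaning that $X_F:=|\{i:S_i\subseteq F\}|>r_M(F)$; for the nontrivial direction, if $r_M(\bigcup_{i\in I}S_i)<|I|$ then $F=\operatorname{cl}(\bigcup_{i\in I}S_i)$ is bad. So it suffices to prove $\Pr[\exists\ \text{bad flat}]\le\tfrac12$.

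I would then union-bound over flats, grouped by rank $r$. For a fixed flat $F$ of rank $r$, each $B_i\cap F$ is independent, so $|B_i\cap F|\le r$ and $\Pr[S_i\subseteq F]=\binom{|B_i\cap F|}{\alpha}/\binom n\alpha\le(r/n)^\alpha$; since the $S_i$ are independent, bounding over which $r+1$ of them lie in $F$ gives $\Pr[X_F\ge r+1]\le\binom n{r+1}(r/n)^{\alpha(r+1)}$. Crucially, the only flats that can be bad are the \emph{dense} ones, with $|B_i\cap F|\ge\alpha$ for at least $r+1$ indices $i$ (and hence, since $\sum_i|B_i\cap F|=|F|$, with at least $(r+1)\alpha$ elements). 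When $r$ is bounded away from $n$ the quantity $(r/n)^\alpha=(r/n)^{3\lceil\log n\rceil}$ is $n^{-\Theta(1)}$ per index — this is exactly where the size of $\alpha$ enters — so the products $\binom{n^2}{r}\binom n{r+1}(r/n)^{\alpha(r+1)}$ decay fast and sum to $o(1)$ over such $r$, even using the crude count $\binom{n^2}{r}$ for the number of rank-$r$ flats. For flats of rank very close to $n$, where $(r/n)^\alpha$ is no longer small, I would instead count escapes: $X_F\ge r+1$ forces at most $n-r-1$ of the sets to escape $F$, whereas $\Pr[S_i\not\subseteq F]\ge 1-(1-\alpha/n)^{n-r}$ for every $i$, so the expected number of escapes is of order $\alpha(n-r)$, far exceeding $n-r-1$; a Chernoff bound then makes $\Pr[X_F\ge r+1]$ at most $\rho^{\,n-r}$ for some $\rho=n^{-\Theta(1)}$.

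The hard part is making the two estimates meet and, in particular, counting the dense flats of rank close to $n$: a matroid can have exponentially many hyperplanes, so the bound $\binom{n^2}{r}$ is far too weak there. I expect one must show that a dense flat of corank $j$ is an intersection of at most $j$ dense hyperplanes, and that each dense hyperplane is pinned down by only polynomially much data drawn from the $B_i$, giving a bound of the form $n^{O(j)}$, which the $\rho^{\,j}$ factor above can then absorb. Fitting the dense‑flat regime and the low‑rank regime together across the crossover in $r$, and carrying out the arithmetic with the constant $3$ (rather than a larger constant) in $\alpha=3\lceil\log n\rceil$, is where the ``more care with the calculations'' alluded to in the paper is needed; the Rado reduction and the small‑rank union bound are routine.
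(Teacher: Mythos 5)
There is a genuine gap, and you have correctly identified where it is: the union bound over flats cannot be completed without a count of the ``dense'' flats of rank close to $n$, and no such count is supplied. Your per-flat estimate is fine --- for a fixed flat $F$ of rank $r$ one does get $\Pr[X_F\ge r+1]\le\binom{n}{r+1}(r/n)^{\alpha(r+1)}$, which is roughly $n^{-3}$ when $r=n-1$ --- but a rank-$n$ matroid on $n^2$ elements can have exponentially many hyperplanes meeting every $B_i$ in far more than $\alpha$ elements (in a binary matroid each of the $2^n-1$ hyperplanes typically meets each basis in about $n/2$ elements), so the denseness restriction does not bring the flat count down to anything an $n^{-\Theta(1)}$ per-flat bound can absorb. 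The proposed rescue --- that a dense hyperplane is ``pinned down by polynomially much data'' --- is precisely the unproved structural claim on which the whole argument hinges, and it is doubtful as stated; in the binary example the union bound can only be saved by weighting each hyperplane by its actual intersection profile $(|B_1\cap H|,\dots,|B_n\cap H|)$ rather than by the crude bound $|B_i\cap H|\le r$, and your outline does not do this.

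The paper avoids the problem entirely by never enumerating flats. It applies the union bound over the $\binom{n}{k}$ index sets $X$ in Rado's condition and then proves (Lemma~\ref{lemma:worst-case-Rado}) that for a fixed $X$ with $|X|=k$ the probability that $r\bigl(\bigcup_{i\in X}S_i\bigr)<k$ is maximized when $B_1=\cdots=B_k$, via an explicit coupling: the independent set built greedily from $S_1,\dots,S_k$ by successive basis exchanges has the same size distribution as $|S'_1\cup\cdots\cup S'_k|$ for $\alpha$-sets drawn uniformly from a single $n$-element set. This reduces the failure probability for each $X$ to the coupon-collector quantity $Q_{k,n}\le\binom{n}{k-1}\bigl(\binom{k-1}{\alpha}/\binom{n}{\alpha}\bigr)^k$, in which the factor $\binom{n}{k-1}$ plays exactly the role your flat count would have to play --- but it counts subsets of an $n$-element set, not flats of the matroid. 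That reduction is the key idea missing from your proposal; without it, or a genuine substitute for the dense-flat count, the argument does not close.
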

	
	We start by showing that Theorem~\ref{thm:main2} implies Theorem~\ref{thm:main}.
	
	\begin{proof}[Proof of Theorem \ref{thm:main}] Let $m = \floor*{\frac{n}{6 \ceil{\log n}}}$. For each $i \in \{1, \dots, n\}$, 
	let $S_{i,1}, \dots, S_{i,2m}$ be disjoint $\alpha$-element subsets of $B_i$ chosen at random.
	For each $j \in \{1, \dots, 2m\}$, the sets $S_{1,j}, \dots, S_{n,j}$ are subsets of $B_1,\ldots,B_n$ that are chosen
	independently and uniformly at random, so, by Theorem~\ref{thm:main2}, 
	$(S_{1,j}, \dots, S_{n,j})$ contains a transversal basis with probability at least $\nicefrac 12$. By the linearity of expectation, the expected number of disjoint transversal bases of $(B_1, \dots, B_n)$ is at least $\frac 12 \cdot 2m$. So there exist at least $m$ disjoint transversal bases.
	\end{proof}
	
	To prove Theorem \ref{thm:main2}, we use the following result of Rado~[\ref{cite:R42}] which characterizes the existence of a transversal basis.
	
	\begin{theorem} [Rado's Theorem]
		Let $(S_1, \dots, S_n)$ be sets of elements in a rank-$n$ matroid. Then there is a  transversal basis of $(S_1, \dots, S_n)$ if and only if $r(\cup_{i \in X} S_i) \geq |X|$ for all $X \subseteq \{1, \dots, n\}$.
	\end{theorem}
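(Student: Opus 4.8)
The plan is to prove Rado's Theorem directly from the matroid rank axioms, by induction on $n$, reducing via deletion and contraction; only the ``if'' direction has content. For the ``only if'' direction, if $\{x_1,\dots,x_n\}$ is a transversal basis with $x_i\in S_i$, then for every $X\subseteq\{1,\dots,n\}$ the set $\{x_i:i\in X\}$ is independent, has size $|X|$, and lies in $\bigcup_{i\in X}S_i$, so $r(\bigcup_{i\in X}S_i)\ge |X|$.

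For the ``if'' direction I would induct on $n$; the base case $n=1$ is immediate, since $r(S_1)\ge 1$ forces $S_1$ to contain a non-loop, which is a basis. Call a set $X\subseteq\{1,\dots,n\}$ \emph{tight} if $r(\bigcup_{i\in X}S_i)=|X|$, and split into two cases according to whether $\{1,\dots,n\}$ has a nonempty proper tight subset. In the first case, every nonempty proper subset $X$ of $\{1,\dots,n\}$ satisfies $r(\bigcup_{i\in X}S_i)\ge|X|+1$. I would pick a non-loop $x_n\in S_n$ and pass to $M/x_n$, which has rank $n-1$. Setting $S_i'=S_i\setminus L$ for $i<n$, where $L$ is the set of loops of $M/x_n$, the identity $r_{M/x_n}(Y)=r_M(Y\cup\{x_n\})-1$ together with the fact that deleting loops of $M/x_n$ does not change $\mathrm{cl}_M(\{x_n\}\cup Y)$ shows that $(S_1',\dots,S_{n-1}')$ still satisfies Rado's condition in $M/x_n$; induction produces a transversal basis there, and adjoining $x_n$ gives a transversal basis of $(S_1,\dots,S_n)$ in $M$ (each $S_i'$ excludes $x_n$, which is a loop of $M/x_n$, so the new representatives are all distinct from $x_n$).

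In the second case, reindex so that $X=\{1,\dots,k\}$ is tight with $1\le k<n$, and set $A=\bigcup_{i=1}^k S_i$, so $r(A)=k$. Applying the inductive hypothesis to $(S_1,\dots,S_k)$ inside the restriction $M|A$ of rank $k$ — Rado's condition is inherited verbatim, since $r_{M|A}$ agrees with $r_M$ on subsets of $A$ — yields distinct representatives $x_1,\dots,x_k$ spanning $A$, so $\mathrm{cl}_M(\{x_1,\dots,x_k\})=\mathrm{cl}_M(A)$. Now contract: in $M'=M/\{x_1,\dots,x_k\}$, of rank $n-k$, let $S_i'$ (for $i>k$) be $S_i$ with the loops of $M'$ removed; these lie in $S_i$ and avoid $A$. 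Using $r_{M'}(Y)=r_M(Y\cup\{x_1,\dots,x_k\})-k$, the equality of closures above, and Rado's condition for index sets of the form $\{1,\dots,k\}\cup Z$, I would check that $(S_{k+1}',\dots,S_n')$ satisfies Rado's condition in $M'$; induction gives distinct representatives $x_{k+1},\dots,x_n$ spanning $M'$, and then $\{x_1,\dots,x_n\}$ is a basis of $M$, with the two blocks disjoint because the first lies in $A$ and the second avoids $A$.

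The main obstacle is entirely bookkeeping: verifying in each case that Rado's inequality survives the passage to a minor. This relies only on the standard minor rank formulas $r_{M|A}(Y)=r_M(Y)$ for $Y\subseteq A$ and $r_{M/T}(Y)=r_M(Y\cup T)-r_M(T)$, together with the observation that replacing a set by any set having the same closure leaves all joint ranks unchanged. No deeper idea is required beyond the tight/slack dichotomy, which is precisely the mechanism behind the usual inductive proof of Hall's theorem.
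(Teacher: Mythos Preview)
The paper does not prove Rado's Theorem; it quotes the result from Rado's 1942 paper and uses it as a black box. So there is no ``paper's own proof'' to compare against.

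Your argument is nonetheless correct and is the standard inductive proof, mirroring the tight/slack dichotomy in the usual proof of Hall's marriage theorem. The verifications you sketch all go through: in Case~1, the condition $r(S_n)\ge 1$ (from $X=\{n\}$) guarantees a non-loop $x_n\in S_n$, and for nonempty $X\subseteq\{1,\dots,n-1\}$ the slackness assumption gives
\[
r_{M/x_n}\Bigl(\bigcup_{i\in X}S_i'\Bigr)=r_M\Bigl(\bigcup_{i\in X}S_i\cup\{x_n\}\Bigr)-1\ge r_M\Bigl(\bigcup_{i\in X}S_i\Bigr)-1\ge|X|,
\]
while in Case~2 the identity $\mathrm{cl}_M(\{x_1,\dots,x_k\})=\mathrm{cl}_M(A)$ yields, for $Z\subseteq\{k+1,\dots,n\}$,
\[
r_{M'}\Bigl(\bigcup_{i\in Z}S_i'\Bigr)=r_M\Bigl(\bigcup_{i\in\{1,\dots,k\}\cup Z}S_i\Bigr)-k\ge|Z|.
\]
The disjointness of the two blocks of representatives follows exactly as you say, since the second block lies outside $\mathrm{cl}_M(A)\supseteq A$. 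Nothing is missing.
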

	
	In order to prove Theorem~\ref{thm:main2}, we will focus on the probability of failure of each of the conditions in Rado's Theorem.
	Let $B_1,\ldots, B_k$  be bases (not necessarily disjoint) of a rank-$n$ matroid and let $\alpha = 3\ceil{\log n}$.
	We let $Q(B_1,\ldots,B_k)$ denote the probability that, when $\alpha$-element subsets
	$S_1,\ldots,S_k$ are chosen independently and uniformly at random from $B_1,\ldots,B_k$, respectively,
	we have $r(S_1\cup\cdots\cup S_k)< k$.
	
	Note that we do not require the sets $B_1,\ldots,B_k$ to be disjoint. In fact, the case that
	$B_1=\cdots=B_k$ is interesting and plays an important role in the proof. In this case we have
	$r(S_1\cup\cdots\cup S_k) = |S_1\cup\cdots\cup S_k|$, and hence the failure probability $Q(B_1,\ldots,B_k)$
	depends only on $k$ and $n$; we let $Q_{k,n} = Q(B_1,\ldots,B_k)$. Thus 
	$Q_{k,n}$ denotes the probability that, when $\alpha$-element sets
	$S_1,\ldots,S_k$ are chosen independently and uniformly at random from the set $\{1,\ldots,n\}$
	we have $|S_1\cup\cdots\cup S_k|< k$.

	The following key lemma shows that the failure probability $Q(B_1,\ldots,B_k)$ is worst when $B_1=\cdots=B_k$; we postpone the proof of this 
	result until Section \ref{sec:details}.
	
	\begin{lemma} \label{lemma:worst-case-Rado}
		Let $n$ and $k$ be positive integers with $k \leq n$ and let $B_1, \dots, B_k$ be bases of a rank-$n$ matroid. Then $Q(B_1, \dots, B_k) \leq {Q_{k,n}}$.
	\end{lemma}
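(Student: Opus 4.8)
The plan is to prove the lemma by a coupling argument. I will construct, on a common probability space, random sets $S_1,\dots,S_k$ (with $S_i$ uniform among the $\alpha$-element subsets of $B_i$, and $S_1,\dots,S_k$ mutually independent) together with random sets $T_1,\dots,T_k$ (with $T_i$ uniform among the $\alpha$-element subsets of $\{1,\dots,n\}$, and $T_1,\dots,T_k$ mutually independent) in such a way that
\[
r(S_1\cup\cdots\cup S_k)\ \ge\ |T_1\cup\cdots\cup T_k|
\]
holds surely. Granting this, the event $r(S_1\cup\cdots\cup S_k)<k$ is contained in the event $|T_1\cup\cdots\cup T_k|<k$, and since the two families have their prescribed marginal laws, taking probabilities yields $Q(B_1,\dots,B_k)\le Q_{k,n}$.

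I would build the coupling by revealing the pairs $(S_1,T_1),(S_2,T_2),\dots$ one index at a time, maintaining the invariant $\rho_i\ge\sigma_i$, where $\rho_i:=r(S_1\cup\cdots\cup S_i)$ and $\sigma_i:=|T_1\cup\cdots\cup T_i|$; this holds trivially at $i=0$. Suppose it holds after step $i-1$, write $M$ for the given rank-$n$ matroid, and set $F_{i-1}:=\operatorname{cl}(S_1\cup\cdots\cup S_{i-1})$ and $U_{i-1}:=T_1\cup\cdots\cup T_{i-1}$, so $|U_{i-1}|=\sigma_{i-1}\le\rho_{i-1}=r(F_{i-1})$. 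Since $B_i$ is a basis of $M$, its image in the contraction $M/F_{i-1}$ spans that minor, so there is a set $C_i\subseteq B_i$ with $|C_i|=n-\rho_{i-1}$ whose image is a basis of $M/F_{i-1}$; put $D_i:=B_i\setminus C_i$, so $|D_i|=\rho_{i-1}$. As $|U_{i-1}|\le|D_i|$, I can choose a bijection $\psi_i\colon\{1,\dots,n\}\to B_i$ with $\psi_i(U_{i-1})\subseteq D_i$. Now I draw $T_i$ uniformly among the $\alpha$-element subsets of $\{1,\dots,n\}$, independently of everything revealed so far, and set $S_i:=\psi_i(T_i)$. Since each $\psi_i$ is a function of the history while $T_i$ is drawn afresh, the resulting families $(S_i)$ and $(T_i)$ have exactly the required joint distributions.

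To check that the invariant survives step $i$: because closure preserves the rank of a union and the image of $C_i$ is independent in $M/F_{i-1}$, we get
\[
\rho_i=\rho_{i-1}+r_{M/F_{i-1}}(S_i)\ \ge\ \rho_{i-1}+|S_i\cap C_i|.
\]
On the free side, $|T_i\setminus U_{i-1}|=|\psi_i(T_i)\setminus\psi_i(U_{i-1})|=|S_i\setminus\psi_i(U_{i-1})|$, and since $\psi_i(U_{i-1})\subseteq D_i$ the set $S_i\setminus\psi_i(U_{i-1})$ is the disjoint union of $S_i\cap C_i$ and $(S_i\cap D_i)\setminus\psi_i(U_{i-1})$; the first part has $|S_i\cap C_i|$ elements and the second at most $|D_i|-|\psi_i(U_{i-1})|=\rho_{i-1}-\sigma_{i-1}$. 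Hence $\sigma_i\le\sigma_{i-1}+|S_i\cap C_i|+(\rho_{i-1}-\sigma_{i-1})=\rho_{i-1}+|S_i\cap C_i|\le\rho_i$, so $\rho_i\ge\sigma_i$. At $i=k$ this is the desired pointwise domination.

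The delicate point---the one I would write out most carefully---is the choice of $\psi_i$: the already-used free-side coordinates $U_{i-1}$ must be routed into the redundant part $D_i$ of $B_i$, namely the part not needed to re-span $M/F_{i-1}$, so that the growth of the free-side union at step $i$ exceeds the guaranteed rank growth $|S_i\cap C_i|$ by at most the slack $\rho_{i-1}-\sigma_{i-1}$ banked in earlier steps. One also has to be careful that $\psi_i$ is allowed to depend on the history whereas $T_i$ is drawn independently of it, so that $(S_i)$ and $(T_i)$ come out with the correct marginal laws; everything else is routine manipulation of matroid rank and contraction.
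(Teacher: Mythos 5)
Your coupling is correct and is essentially the paper's own argument: there too one sets $S_i=\psi_i(S_i')$ for history-dependent bijections $\psi_i$ that route the already-used free coordinates into the part of $B_i$ already covered by the earlier steps, so that the sets $S_i$ retain the correct independent uniform marginals while the rank pointwise dominates the size of the free union. The only cosmetic difference is that the paper tracks a greedily built independent set $I_i$ (inside an auxiliary basis $B_i'\subseteq B_i\cup I_{i-1}$) for which $|I_i|=|S_1'\cup\cdots\cup S_i'|$ holds with equality, whereas you track the true rank $\rho_i$ and prove the domination $\rho_i\ge\sigma_i$ by absorbing the discrepancy into the accumulated slack $\rho_{i-1}-\sigma_{i-1}$; both versions yield the same conclusion.
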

	
	Computing $Q_{k,n}$ is closely related to the Coupon Collector's Problem, as well as
	a bipartite matching problem considered by Erd\H os and Renyi~[\ref{cite:ER63}]. 
	
	\begin{lemma} \label{lemma:term-prob-bound}
		Let $n$ and $k$ be positive integers with $k \leq n$ and let $\alpha=3\ceil{\log n}$. Then
		$$
			{Q_{k,n}} \leq \binom{n}{k-1} \left( \frac{\binom{k-1}{\alpha}}{\binom{n}{\alpha}} \right)^k .
		$$
	\end{lemma}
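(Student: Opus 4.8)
The plan is to bound $Q_{k,n}$ by a union bound over the possible ``small'' supports in which the union $S_1\cup\cdots\cup S_k$ could lie. First I would observe that, since $k-1\le n$, the event $|S_1\cup\cdots\cup S_k|<k$ holds if and only if there is a set $T\subseteq\{1,\ldots,n\}$ with $|T|=k-1$ such that $S_i\subseteq T$ for every $i\in\{1,\ldots,k\}$. Indeed, any union of size at most $k-1$ can be extended (inside $\{1,\ldots,n\}$, using $k-1\le n$) to such a set $T$; conversely, if all $S_i\subseteq T$ then $|S_1\cup\cdots\cup S_k|\le|T|=k-1<k$.

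Next, fix a set $T$ with $|T|=k-1$. Since each $S_i$ is a uniformly random $\alpha$-element subset of $\{1,\ldots,n\}$, the probability that $S_i\subseteq T$ is the ratio of the number of $\alpha$-subsets of $T$ to the number of $\alpha$-subsets of $\{1,\ldots,n\}$, namely $\binom{k-1}{\alpha}/\binom{n}{\alpha}$, with the convention that $\binom{k-1}{\alpha}=0$ when $\alpha>k-1$ (in which case $Q_{k,n}=0$ and the claimed inequality holds trivially, consistent with the fact that $|S_1|=\alpha\ge k$ forces $|S_1\cup\cdots\cup S_k|\ge k$). Because $S_1,\ldots,S_k$ are chosen independently, the probability that $S_i\subseteq T$ simultaneously for all $i$ is $\bigl(\binom{k-1}{\alpha}/\binom{n}{\alpha}\bigr)^{k}$.

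Finally, I would apply the union bound over the $\binom{n}{k-1}$ choices of $T$ to conclude
$$
Q_{k,n}\;\le\;\sum_{T}\ \Pr\!\left[S_i\subseteq T\ \text{for all}\ i\right]\;=\;\binom{n}{k-1}\left( \frac{\binom{k-1}{\alpha}}{\binom{n}{\alpha}} \right)^{k},
$$
which is exactly the stated bound.

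This argument is essentially routine: the only points requiring any care are the equivalence in the first step (which uses $k\le n$) and the degenerate case $\alpha>k-1$, so I do not anticipate a genuine obstacle here. The real work of proving Theorem~\ref{thm:main2} will come afterwards, in combining this estimate with Lemma~\ref{lemma:worst-case-Rado} and summing the failure probabilities $Q_{k,n}$ over the Rado conditions $k=1,\ldots,n$ to show the total is at most $\nicefrac12$; the bound here is designed so that, with $\alpha=3\ceil{\log n}$, the terms decay fast enough for that sum to be controlled.
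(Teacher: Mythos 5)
Your argument is correct and is essentially the same as the paper's: the paper bounds the number of tuples $(S_1,\ldots,S_k)$ with $|S_1\cup\cdots\cup S_k|<k$ by summing, over all $(k-1)$-element subsets $X$ of $\{1,\ldots,n\}$, the number of ways to choose all the $S_i$ inside $X$, which is exactly your union bound in counting form. Your explicit treatment of the degenerate case $\alpha>k-1$ is a harmless extra detail.
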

	
	\begin{proof}
	There are ${n\choose \alpha}^k$ ways to choose $\alpha$-element sets
	$S_1,\ldots,S_k$ from $\{1,\ldots,n\}$. To bound the number of such $(S_1,\ldots,S_k)$
	with $|S_1\cup\cdots\cup S_k|< k$, we sum, over all $(k-1)$-element subsets $X$ of
	$\{1,\ldots,n\}$, the number of ways to choose $(S_1,\ldots,S_k)$ from $X$.
	\end{proof}
	
	Combining the above results gives us an upper bound on the failure probability in Theorem~\ref{thm:main2}.
	\begin{lemma} \label{lem:main2}
		Let $B_1, \dots, B_n$ be disjoint bases of a rank-$n$ matroid, where $n\ge 2$, and let $\alpha = 3 \ceil{\log n}$. If we choose $\alpha$-element sets $S_1 \subseteq B_1, \dots, S_n \subseteq B_n$ independently and uniformly at random, then the probability that $(S_1, \dots, S_n)$ {\em does not}
		contain a transversal basis is at most
		$$\sum_{k=1}^{n} \binom{n}{k} \binom{n}{k-1} \left(\frac{k-1}{n}\right)^{k\alpha}.$$
	\end{lemma}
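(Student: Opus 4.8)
The plan is to use Rado's Theorem to rewrite the bad event as a union, over subsets $X\subseteq\{1,\dots,n\}$, of the events ``$r\bigl(\bigcup_{i\in X}S_i\bigr)<|X|$'', apply a union bound, and then estimate each term using Lemmas~\ref{lemma:worst-case-Rado} and~\ref{lemma:term-prob-bound}. Concretely, by Rado's Theorem the family $(S_1,\dots,S_n)$ fails to contain a transversal basis if and only if $r\bigl(\bigcup_{i\in X}S_i\bigr)<|X|$ for some $X\subseteq\{1,\dots,n\}$. Since $r(\emptyset)=0$, the empty set is never such a witness and we may restrict to $1\le|X|\le n$, so the union bound bounds the failure probability by
\[
	\sum_{k=1}^{n}\ \sum_{\substack{X\subseteq\{1,\dots,n\}\\ |X|=k}}\Pr\Bigl[r\bigl(\textstyle\bigcup_{i\in X}S_i\bigr)<k\Bigr].
\]

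Next I would bound each inner term. Fix $X=\{i_1,\dots,i_k\}$. Since $S_{i_1},\dots,S_{i_k}$ are independent and uniform $\alpha$-element subsets of $B_{i_1},\dots,B_{i_k}$, with $\alpha=3\ceil{\log n}$ matching the definition of $Q$, we have $\Pr\bigl[r(S_{i_1}\cup\cdots\cup S_{i_k})<k\bigr]=Q(B_{i_1},\dots,B_{i_k})$, which is at most $Q_{k,n}$ by Lemma~\ref{lemma:worst-case-Rado}, and hence at most $\binom{n}{k-1}\bigl(\binom{k-1}{\alpha}/\binom{n}{\alpha}\bigr)^{k}$ by Lemma~\ref{lemma:term-prob-bound}. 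There are $\binom{n}{k}$ sets $X$ of size $k$, so the double sum above is at most $\sum_{k=1}^{n}\binom{n}{k}\binom{n}{k-1}\bigl(\binom{k-1}{\alpha}/\binom{n}{\alpha}\bigr)^{k}$.

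It remains to simplify the binomial ratio. For $1\le k\le n$ and $0\le i<\alpha$ we have $\frac{k-1-i}{n-i}\le\frac{k-1}{n}$, since $k-1\le n$ gives $n(k-1-i)\le(k-1)(n-i)$; multiplying over $i=0,\dots,\alpha-1$ yields $\binom{k-1}{\alpha}/\binom{n}{\alpha}=\prod_{i=0}^{\alpha-1}\frac{k-1-i}{n-i}\le\bigl(\frac{k-1}{n}\bigr)^{\alpha}$, which also holds trivially, with left side $0$, when $k-1<\alpha$. Raising to the $k$-th power and substituting gives the claimed bound $\sum_{k=1}^{n}\binom{n}{k}\binom{n}{k-1}\bigl(\frac{k-1}{n}\bigr)^{k\alpha}$.

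I do not expect a serious obstacle here: the substantive content has been isolated into Lemmas~\ref{lemma:worst-case-Rado} and~\ref{lemma:term-prob-bound}, and what remains is the translation of Rado's criterion into a union bound and the elementary ratio estimate above. The points to watch are using exactly the same $\alpha$ throughout (so that the term probabilities really are instances of $Q$), and checking the degenerate terms --- the $k=1$ term vanishes because $\bigl(\frac{k-1}{n}\bigr)^{k\alpha}=0$, consistent with $r(S_i)=\alpha\ge 1$ always holding, while the $k=n$ term is the probability that $S_1\cup\cdots\cup S_n$ fails to span the matroid.
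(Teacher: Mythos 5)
Your proposal is correct and follows essentially the same route as the paper: a union bound over the Rado conditions, the reduction of each term to $Q_{k,n}$ via Lemma~\ref{lemma:worst-case-Rado} and Lemma~\ref{lemma:term-prob-bound}, and the termwise estimate $\binom{k-1}{\alpha}/\binom{n}{\alpha}\le\bigl(\frac{k-1}{n}\bigr)^{\alpha}$ using $k-1\le n$. Your write-up simply spells out the counting of the $\binom{n}{k}$ subsets and the degenerate cases more explicitly than the paper does.
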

	
	\begin{proof}
	By the union bound, the failure probability is at most the sum of the failure probabilities of each of the conditions in Rado's Theorem, so,
	by Lemmas~\ref{lemma:worst-case-Rado} and~\ref{lemma:term-prob-bound}, the probability that $(S_1, \dots, S_n)$  does not
       contain a transversal basis is at most
       $$\sum_{k=1}^{n} \binom{n}{k} \binom{n}{k-1}\left( \frac{\binom{k-1}{\alpha}}{\binom{n}{\alpha}} \right)^k .$$
       Moreover
       \begin{eqnarray*}
       \frac{\binom{k-1}{\alpha}}{\binom{n}{\alpha}}
       &=&\left(\frac{k-1}{n}\right)\left(\frac{k-2}{n-1}\right)\cdots\left(\frac{k-\alpha}{n-\alpha+1}\right)\\
       &\le&\left(\frac{k-1}{n}\right)^\alpha,
       \end{eqnarray*}
       since $k-1\le n$.
	\end{proof}
	Theorem~\ref{thm:main2} follows via a routine technical calculation which we complete in  Section~\ref{sec:details}.

	\section{Technical details} \label{sec:details}
	 
	 We start with the proof of Lemma~\ref{lemma:worst-case-Rado}.
	 \begin{proof}[Proof of Lemma~\ref{lemma:worst-case-Rado}.]
	 Let $B_1,\ldots, B_k$ be bases of a rank-$n$ matroid and let $\alpha = 3\ceil{\log n}$.
	Recall $Q_k(B_1, \dots, B_k)$ is the probability that $r(S_1\cup \cdots\cup S_k) < k$ in an experiment $\mathcal{E}$ where we choose $\alpha$-element subsets $S_1 \subset B_1, \dots, S_k \subset B_k$ independently and uniformly at random. We obtain a lower bound on $r(S_1\cup\cdots\cup S_k)$ 
	by constructing an independent set in a naive way. Given an outcome $(S_1, \dots, S_k)$ of $\mathcal{E}$, we construct bases $B'_1, \dots, B'_k$ and independent sets $I_1, \dots, I_k$ iteratively, such that:
	\begin{enumerate}[$\bullet$] 
	 	\item $B'_1 = B_1, I_1 = S_1$, and
	 	\item for each $i \in \{2, \dots, k\}$, the set $B'_i$ is an arbitrary basis with $I_{i-1} \subseteq B'_i \subseteq B_i \cup I_{i-1}$, and $I_i = I_{i-1} \cup (S_i \cap B'_i)$.
	\end{enumerate}
	 
	 Observe that the independent set $I_{i-1}$ can be extended to a basis $B'_i$ with $I_{i-1} \subseteq B'_i \subseteq B_i \cup I_{i-1}$
	 and that $I_i = I_{i-1} \cup (S_i \cap B'_i)\subseteq B'_i$, so $I_i$ is independent. Thus, given $(S_1, \dots, S_k)$,	 
	 the required bases $B'_1, \dots, B'_k$ and independent sets $I_1, \dots, I_k$ exist.
	 Note that $r(S_1\cup\cdots\cup S_k)\ge |I_k|$. It suffices to prove that $|I_k|<k$ with probability
	{\em equal to} $Q_{k,n}$. To see this we will describe an equivalent random process for generating 
	$B'_1,\ldots, B'_k$ and $I_1,\ldots, I_k$ based on a collection $(S'_1,\ldots,S'_k)$ of $\alpha$-element sets chosen
	independently and uniformly at random from $\{1,\ldots,n\}$ such that $|I_k|=|S'_1\cup\cdots\cup S'_k|$.
	 
	 We start with an observation regarding the construction of the sets $B'_1,\ldots,B'_k$ and $I_1,\ldots,I_k$.
	 Suppose, for some $i\ge 2$, we have already created $B'_1,\ldots,B'_{i-1}$ and $I_1,\ldots,I_{i-1}$.
	 We construct $B'_i$ by extending $I_{i-1}$ to a basis within $I_{i-1}\cup B_i$. Up to this point we have not used
	 the set $S_i$, so we may suppose that it is randomly generated at this time. Moreover we claim that, for the purpose of constructing $I_i$, we     may choose $S_i$ randomly from $B'_i$ instead of $B_i$. To see this, consider a bijection from $B_i$ to $B'_i$
	 that fixes the elements in $B_i \cap B'_i$, and let $S''_i$ denote the image of $S_i$ under this bijection. Since $B'_i-B_i \subseteq I_{i-1}$, we have
	 $I_{i-1} \cup (S_i \cap B'_i) = I_{i-1} \cup (S''_i \cap B'_i)$, so the set $I_i$, considered as a random variable,
	 has the same distribution when we choose $S_i$ from $B_i$ as it does when we choose $S_i$ from $B'_i$.
	 
	 In the following process, we will assume that sets $(S'_1,\ldots,S'_k)$ are only generated upon request. 
	 Initially we set $B'_1=B_1$ and choose an arbitrary bijection $\psi_1:\{1,\ldots,n\}\rightarrow B'_1$.
	 Now request $S'_1$. Note that $\psi_1(S_1')$ is chosen uniformly at random from the $\alpha$-element subsets of $B'_1$.
	 Set $S_1=\psi_1(S'_1)$ and $I_1=S_1$. For some $i\ge 2$, suppose that we have already created $B'_1,\ldots,B'_{i-1}$,
	 $\psi_1,\ldots,\psi_{i-1}$, $S_1,\ldots,S_{i-1}$ and $I_1,\ldots,I_{i-1}$. As before, we construct $B'_i$ by extending $I_{i-1}$ to a 
	 basis within $I_{i-1}\cup B_i$. Construct a bijection $\psi_i:\{1,\ldots,n\}\rightarrow B'_i$ such that
	 $\psi^{-1}_{i}(e)=\psi^{-1}_{i-1}(e)$ for all $e\in I_{i-1}$. Now request $S'_i$. Note that $\psi_i(S_i')$ is chosen uniformly at random from 
	 the $\alpha$-element subsets of $B'_i$. Set $S_i=\psi_i(S'_i)$ and $I_i=I_{i-1}\cup S_i$.
	 
	 A simple inductive argument shows that $|I_i| = |S'_1\cup\cdots\cup S'_i|$ for each $i\in\{1,\ldots,n\}$.
	 In particular, $|I_k| = |S'_1\cup\cdots\cup S'_k|$, as required.
	 \end{proof}
	  	
	It remains to prove Theorem~\ref{thm:main2}.
	
	\begin{proof}[Proof of Theorem~\ref{thm:main2}.]
	Let
	\[
		q_n =\sum_{k=1}^{n} \binom{n}{k} \binom{n}{k-1} \left(\frac{k-1}{n}\right)^{k\alpha}.
	\]
	
	By Lemma~\ref{lem:main2}, it suffices to prove that $q_n\le \nicefrac{1}{2}$.
	We have verified this numerically for all $n\in\{2,\ldots,59\}$ using Maple, so we may assume that 
	$n\ge 60$.
	
	Now we split the sum in two parts, change the index of summation in the second part, and apply the inequality
	$1+x\le e^x$, after which the terms in the two parts become identical.
	\allowdisplaybreaks{
	\begin{eqnarray*}
	q_n & = & \sum_{k=1}^{\lfloor\nicefrac{n}{2}\rfloor} \binom{n}{k} \binom{n}{k-1} \left(\frac{k-1}{n}\right)^{k\alpha} +\\
	 &&\quad \sum_{k=\lfloor\nicefrac{n}{2}\rfloor + 1}^{n} \binom{n}{k} \binom{n}{k-1} \left(\frac{k-1}{n}\right)^{k\alpha} \\
	 &= & \sum_{k=1}^{\lfloor\nicefrac{n}{2}\rfloor} \binom{n}{k} \binom{n}{k-1} \left(\frac{k-1}{n}\right)^{k\alpha} +\\
	 &&\quad \sum_{k=1}^{\lceil\nicefrac{n}{2}\rceil} \binom{n}{n-k+1} \binom{n}{n-k} \left(\frac{n-k}{n}\right)^{(n-k+1)\alpha} \\
	 &= & \sum_{k=1}^{\lfloor\nicefrac{n}{2}\rfloor} \binom{n}{k} \binom{n}{k-1} \left(1-\frac{n-k+1}{n}\right)^{k\alpha} +\\
	 &&\quad \sum_{k=1}^{\lceil\nicefrac{n}{2}\rceil } \binom{n}{k-1} \binom{n}{k} \left(1-\frac{k}{n}\right)^{(n-k+1)\alpha} \\
     &\le & \sum_{k=1}^{\lceil\nicefrac{n}{2}\rceil} \binom{n}{k}^2  e^{-\frac{n-k+1}{n} \cdot k\alpha} +\quad \sum_{k=1}^{\lceil\nicefrac{n}{2}\rceil }  \binom{n}{k}^2 e^{-\frac{k}{n}\cdot(n-k+1)\alpha} \\
	 &\le& 2 \sum_{k=1}^{\lceil\nicefrac{n}{2}\rceil} \left(\frac{en}{k}\right)^{2k}  e^{-\frac{n-k+1}{n} \cdot k\alpha},
	\end{eqnarray*}
	}
	as $\binom{n}{k} \leq \left(\frac{en}{k}\right)^k$. This bound is decreasing as a function of $\alpha$, so we can
	replace $\alpha$ with $3\log{n}$; after simplifying we get 
	\[
	q_n \leq 2 \sum_{k=1}^{\ceil{\nicefrac n2}} \left(\frac{e}{k}\right)^{2k} n^{-k+\frac{3k(k-1)}{n}}.
	\]
	
	Let $t_k =\left(\frac{e}{k}\right)^{2k} n^{-k+\frac{3k(k-1)}{n}}$. 
	\begin{claim} For each $k\in \{1,\ldots,\ceil{\frac n2}\}$ we have $t_k\le (\frac 12)^{k+2}$.
	\end{claim}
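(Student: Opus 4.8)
The plan is to take $k$-th roots to collapse the claim into a single clean inequality, and then to exploit concavity in $k$ so that only two endpoint values of $k$ need to be checked.

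First I would write $t_k^{1/k} = e^2/\psi(k)$, where $\psi(k) := k^2\, n^{1 - \frac{3(k-1)}{n}}$. Since $k \ge 1$ we have $(k+2)/k \le 3$, hence $2^{-3} \le 2^{-(k+2)/k}$, so it suffices to prove $t_k^{1/k} \le \tfrac18$, that is,
\[
	\psi(k) = k^2\, n^{1 - \frac{3(k-1)}{n}} \ge 8e^2 ,
\]
for every $k \in \{1, \dots, \ceil{n/2}\}$. Now $\ln\psi(k) = 2\ln k + \bigl(1 - \tfrac{3(k-1)}{n}\bigr)\ln n$ is a concave function of the real variable $k$ on $[1,\infty)$, since its second derivative equals $-2/k^2 < 0$. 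A concave function on a closed interval attains its minimum at an endpoint, so it is enough to verify $\psi(k) \ge 8e^2$ at $k = 1$ and at $k = \ceil{n/2}$.

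At $k = 1$ we get $\psi(1) = n$, and since $n \ge 60 > 8e^2 \approx 59.1$ this gives $\psi(1) \ge 8e^2$. At $k = \ceil{n/2}$, using $n/2 \le \ceil{n/2} \le (n+1)/2$, we have $k^2 \ge n^2/4$ and $1 - \tfrac{3(k-1)}{n} \ge 1 - \tfrac32\cdot\tfrac{n-1}{n} = -\tfrac12 + \tfrac{3}{2n} \ge -\tfrac12$, so $\psi(\ceil{n/2}) \ge \tfrac14 n^{3/2} \ge \tfrac14\cdot 60^{3/2} > 8e^2$. Combining these two bounds with the reduction above yields $t_k \le 2^{-3k} \le 2^{-(k+2)}$ for all $k$ in the stated range, which is the claim.

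The delicate point is the case $k = 1$: there $\psi(1) = n$ must beat $8e^2 \approx 59.1$, so the argument has essentially no slack, and this is precisely what forces the hypothesis $n \ge 60$ (and explains why the values $n \le 59$ were handled by direct computation in the proof of Theorem~\ref{thm:main2}). For every other value of $k$ there is ample room to spare, so the only real work is organizing the $k$-th-root reduction and the concavity argument cleanly.
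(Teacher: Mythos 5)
Your proof is correct, and it takes a genuinely different route from the paper's. The paper splits the range of $k$ into three pieces: $k\in\{1,2,3\}$ is handled by numerical verification at $n=60$ together with monotonicity of $t_k$ in $n$; for $4\le k\le \frac n3$ it uses $\left(\frac ek\right)^2\le\frac 12$ and the fact that the exponent of $n$ is at most $-1$; and for $\frac n3<k\le\frac n2+1$ it runs a separate computation around the quantity $\left(\frac{9e^2}{n^{3/2-3/n}}\right)^k n^{-1}$. You instead reduce to bounding the $k$-th root $t_k^{1/k}=e^2/\psi(k)$ by $\frac 18$ (legitimate, since $3k\ge k+2$ for $k\ge 1$), and observe that $\ln\psi(k)=2\ln k+\bigl(1-\frac{3(k-1)}{n}\bigr)\ln n$ is concave in $k$, so its minimum over $[1,\ceil{n/2}]$ occurs at an endpoint; both endpoint checks are one-line computations ($\psi(1)=n\ge 60>8e^2$ and $\psi(\ceil{n/2})\ge\frac 14 n^{3/2}$). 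This is cleaner and entirely uniform: it eliminates the case analysis and the Maple verification for $k\in\{1,2,3\}$, and it makes transparent that the hypothesis $n\ge 60$ is needed only at $k=1$, matching the paper's parenthetical remark to that effect. What the paper's version buys in exchange is that it avoids calculus, using only termwise inequalities on the explicit formula for $t_k$. Both arguments cover the full range $1\le k\le\ceil{n/2}$, so your version is a valid drop-in replacement.
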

	
	\begin{proof}[Proof of claim.]
	We have numerically verified, for $k\in\{1,2,3\}$ and $n=60$, that $t_k\le \left(\frac 12\right)^{k+2}$
	(the $k=1$ case is where we require $n\ge 60$).
	Since $t_k$ is non-increasing as a function of $n$, 
	the claim holds for $k\in\{1,2,3\}$.
	
	Now consider the claim for $4 \leq k \leq \frac n3$. Note that when $k \geq 4$, the term $\left(\frac{e}{k}\right)^2$ can be bounded above by $\frac 1 2$. Furthermore, when $k \leq \frac n3$, we have $-k+\frac{3k(k-1)}{n}\le -1$. Hence,
	$$t_k \leq \left(\frac{e}{k}\right)^{2k}\frac 1n
	\le \left(\frac 1 2\right)^k \frac 1{60} < \left(\frac 12\right)^{k+2}.$$
	
	It remains to prove the claim for 
	$\frac n3 < k \leq \frac n2 +1$. Observe that $\frac{9e^2}{n^{\nicefrac 32-\nicefrac 3n}}$ is decreasing in $n$ when $n \geq 2$, so it is routine
	to verify that $\frac{9e^2}{n^{\nicefrac 32-\nicefrac 3n}}<\frac 12$ for all $n\ge 60$. Now,
	\begin{eqnarray*}
		t_k &=&  \left(\left(\frac{e}{k}\right)^2 n^{-1 + \frac{3k}{n}}\right)^k n^{-\frac {3k}{n}}\\
		&\leq&  \left(\left(\frac{e}{\nicefrac n3}\right)^2 n^{-1+\frac{3\left(\nicefrac n2 +1\right)}{n}}\right)^k n^{-1} \\
		&=& \left(\frac{9e^2}{n^{\nicefrac 32-\nicefrac 3n}}\right)^k n^{-1} \\
		&\leq& \left(\frac 12\right)^k\frac 1{60}\\
		&<&\left(\frac 12\right)^{k+2},
	\end{eqnarray*}
	as required.
	\end{proof}
	By the above claim,
	\[
	q_n \le 2\sum_{k= 1}^{\ceil{\nicefrac n2}} t_k 
	\le 2\sum_{k\ge 1} \left(\frac 12\right)^{k+2}
	= \frac 12,
	\]
	which completes the proof of Theorem \ref{thm:main2}. 
	\end{proof}

	\section*{Acknowledgment}
	We  thank Mike Molloy for helpful discussions regarding the probability calculations. We also thank the anonymous referees for their useful suggestions. 
	
	\section*{References}
	
	\newcounter{refs}
	
	\begin{list}{[\arabic{refs}]}%
		{\usecounter{refs}\setlength{\leftmargin}{10mm}\setlength{\itemsep}{0mm}}
	
	\item \label{cite:HR94}
	R. Huang, G.-C. Rota. On the Relations of Various Conjectures on Latin Squares and Straightening Coefficients. Discrete Math., 128 (1994), pp. 225-236.
	
	\item \label{cite:polymath12}
	Polymath 12. Rota's Basis Conjecture: Polymath 12. (2017),\\
	https://polymathprojects.org/2017/03/06/rotas-basis-conjecture-polymath-12-2/
	
	\item \label{cite:GW07}
	J. Geelen, K. Webb. On Rota's Basis Conjecture. SIAM Journal of Discrete Math, Vol. 21, No. 3 (2007), pp. 802-804.
	
	\item \label{cite:R42}
	R. Rado. A theorem on Independence Relations. The Quarterly Journal of Mathematics, Col. os-13, no. 1 (1942), pp. 83-89.
	
	\item \label{cite:ER63}
	P. Erd\H os, A. Renyi. On Random Matrices. Publications of the Mathematical Institute of the Hungarian Academy of Sciences. Vol. 8, Series A, fasc. 3 (1963), pp. 455-460.
	\end{list}
	
\end{document}